\providecommand{\U}[1]{\protect\rule{.1in}{.1in}}
\newtheorem {theorem}{Theorem}[section]
\newtheorem {proposition}[theorem]{Proposition}
\newtheorem {lemma}[theorem]{Lemma}
\begin{document}

\title{Ricci flow and curvature on the variety of flags on the two dimensional
projective space over the complexes, quaternions and the octonions.}
\author{Man-Wai Cheung
\and Nolan R. Wallach}
\maketitle

\begin{abstract}
For homogeneous metrics on the spaces of the title it is shown that the Ricci
flow can move a metric of stricly positive sectional curvature to one with
some negative sectional curvature and one of positive definite Ricci tensor to
one with indefinite signature.

\end{abstract}

\section{Introduction}

In this note, we will show that a metric of the homogeneous Riemannian
manifold $SU(3)/T^{2}$ with strictly positive curvature is deformed to a
metric with some negative sectional curvature by the Ricci flow. This result
has been announced by B\"ohm and Wilking [BW] in which they assert that this
can be proved using a method similar to the one they used to show that a
metric of positive sectional curvature on $Sp(3)/Sp(1)\times Sp(1)\times
Sp(1)$ can be flowed to one with some negative Ricci curvature. We use a
different and simpler approach to prove this result. We will show that if we
initiate the Ricci flow at a metric on the boundary of the metrics with
positive sectional then the derivative of the flow of sectional curvature at a
plane of zero curvature is negative for the all of the examples in [W]. We
also show that for all the examples in [W] (dimension 6,12,24) the Ricci flow
can cause the Ricci tensor to go from positive definite to signature $(d,2d)$
($d=2,4,8)$. In the concluding remarks to the paper we give a simple variant
of Valiev's necessary and sufficient condition for a homogeneous metric on one
of the spaces to have strictly positive sectional curvature. We would like to
thank Lei Ni for suggesting that we look at the curvature transition of the
$6$ dimensional space in [W] under the Ricci flow and for his patience as we
were trying to understand the subtleties of the argument in [BW] for the
twelve dimensional example.

\section{Setup}

In this section, we will set up the notation for the main calculations and
establish the Ricci flow equations in terms of the metric parameters. Set
$G=SU(3),Sp(3)$ or compact $F_{4}$ and let $K$ be respectively a maximal
torus, $T^{2}$, of $SU(3)$, $Sp(1)\times Sp(1)\times Sp(1)$ in $Sp(3)$ or
$Spin(8)$ in compact $F_{4}$. Let ${\mathfrak{g}}$ be the Lie algebra of $G$,
${\mathfrak{k}}$ be the Lie algebra of a $K$. Let ${\mathfrak{p}}$ be the
$Ad(K)$--invariant complement to ${\mathfrak{k}}$ in ${\mathfrak{g}}$. Then
${\mathfrak{p}}$ can be decomposes into a direct sum of three irreducible
inequivalent $K$-invariant subspaces ${\mathfrak{p}}=V_{1}\oplus V_{2}\oplus
V_{3}$. Consider the Ad$(G)$-invariant inner product $\left\langle
X,Y\right\rangle _{0}=-1/2$ Re tr$(X,Y)$ on ${\mathfrak{g}}$ for the first two
examples and in the case of $G=F_{4}$ the unique $Ad(G)$-invariant inner
product that agrees with our choice for the imbedded $Sp(3)$ that is
compatible with the decompositions. The dimensions of the real vector spaces
$V_{i}$ are the same in each case and are respectively $d=2,4,$or $8.$ In each
case we may identify the spaces $V_{i}$ with the fields over $\mathbb{R} $:
$\mathbb{C}$, $\mathbb{H}$(the quaternions), $\mathbb{O}$(the octonions) such
that the inner product is $\operatorname{Re}(z\overline{w})$. If $z\in
V_{1},w\in V_{2}$ then $[z,w]\in V_{3}$ and under our identification
corresponds to $\overline{z}w$ in $V_{3}.$Similarly with sign changes as in
the cross-product $[V_{i},V_{j}]\in V_{k}$ if $i,j,k$ are distinct.. Schur's
Lemma implies that any $K$-invariant inner product on $\mathfrak{p}$ is given
by
\begin{equation}
x_{1}\left\langle ...,...\right\rangle _{0}|_{V_{1}}+x_{2}\left\langle
...,...\right\rangle _{0}|_{V_{2}}+x_{3}\left\langle ...,...\right\rangle
_{0}|_{V_{3}}%
\end{equation}
where $x_{1},x_{2},x_{3}$ are positive constants. Let $g$ be the Riemannian
structure on $M$ corresponding to $(x_{1},x_{2},x_{3})$. We will write
$g\longleftrightarrow(x_{1},x_{2},x_{3})$. In [AW] it was proved that if
$x_{1}=x_{2}=1$ then for all examples above the sectional curvature is
strictly positive if $0<x_{3}<1$ or $1<x_{3}<\frac{4}{3}$. We note

\begin{lemma}
If $x_{1}=x_{2}$ then the sectional curvature is is strictly positive if
$0<\frac{x_{3}}{x_{1}}<1$ or $1<\frac{x_{3}}{x_{1}}<\frac{4}{3}$ and there is
some strictly negative curvature if $\frac{x_{3}}{x_{1}}>\frac{4}{3}$.
\end{lemma}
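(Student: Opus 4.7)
The plan is to reduce to the already-established case $x_{1}=x_{2}=1$ via a scaling argument, and then handle the new negativity assertion by exhibiting a specific $2$--plane on which the sectional curvature changes sign as $x_{3}/x_{1}$ crosses $4/3$.

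First I would observe that for any Riemannian metric $g$, the sectional curvature of $\lambda g$ satisfies $K_{\lambda g}(\sigma)=\lambda^{-1}K_{g}(\sigma)$ on every $2$--plane $\sigma$. Applied to $g\longleftrightarrow(x_{1},x_{1},x_{3})$ with $\lambda=x_{1}^{-1}$, the rescaled metric corresponds to $(1,1,x_{3}/x_{1})$, and since $\lambda>0$ the sign of every sectional curvature is preserved. The positivity half of the lemma, namely strict positivity for $x_{3}/x_{1}\in(0,1)\cup(1,4/3)$, then follows verbatim from the result of [AW] quoted in the preceding paragraph.

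For the second half, reducing by the same rescaling it suffices to prove that if $x_{1}=x_{2}=1$ and $x_{3}>4/3$ then some $2$--plane has strictly negative sectional curvature. The natural candidates are the mixed planes spanned by a pair $X\in V_{i}$, $Y\in V_{j}$ with $i\neq j$, since the bracket relation $[V_{i},V_{j}]\subset V_{k}$ described in the introduction feeds a term of the form $(x_{i}+x_{j}-x_{k})^{2}/(x_{i}x_{j}x_{k})$ into the standard formula for the homogeneous sectional curvature (together with a negative $\frac{3}{4}$--correction of the form $-\tfrac{3}{4}(x_{i}+x_{j}-x_{k})\|[X,Y]\|_{0}^{2}/(x_{i}x_{j})$ when the sign of the parenthesis becomes unfavorable). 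Plugging $x_{1}=x_{2}=1$ and $x_{3}=t$ into this expression, I expect the curvature of an appropriately chosen unit plane in $V_{1}\oplus V_{2}$ (with image in $V_{3}$ under the bracket) to reduce to a rational function of $t$ that is positive for $t<4/3$, vanishes at $t=4/3$, and is strictly negative for $t>4/3$. The positivity and vanishing would already be consistent with the sharpness of the [AW] bound; the negativity for $t>4/3$ would complete the proof.

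The main obstacle, and essentially the only real computation, is identifying the critical $2$--plane and verifying the sign of the resulting rational function. I would carry this out either by directly applying the formula for sectional curvature of a naturally reductive normal metric perturbed on irreducible summands, or, more expeditiously, by quoting the explicit sectional curvature formulas of [W] which already perform this decomposition for the three examples under consideration. Because all three cases share the same bracket structure on $V_{1}\oplus V_{2}\oplus V_{3}$ (only the real dimension $d=2,4,8$ changes), the rational function of $t$ whose sign we analyze is the same in each example, so a single computation handles $SU(3)/T^{2}$, $Sp(3)/Sp(1)^{3}$ and $F_{4}/Spin(8)$ simultaneously.
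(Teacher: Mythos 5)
Your proposal is correct and follows essentially the same route as the paper: rescale to $x_{1}=x_{2}=1$ and quote [AW] for the positivity statement, reduce all three cases to $SU(3)$ via the common bracket structure on $V_{1}\oplus V_{2}\oplus V_{3}$ (the paper does this through the embedding of $SU(3)$ into $G$ and Lemma 7.3 of [W]), and exhibit an explicit $2$-plane whose sectional curvature changes sign at $x_{3}/x_{1}=4/3$. The plane the paper computes with is, at the unperturbed value of its auxiliary parameter, exactly your mixed plane $\operatorname{span}(X,Y)$ with $X\in V_{1}$, $Y\in V_{2}$, $[X,Y]\neq 0$, and its curvature is a positive multiple of $1-\tfrac{3}{4}x_{3}$, which confirms the sign behavior you predicted (positive for $x_{3}<4/3$, zero at $4/3$, negative beyond).
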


\begin{proof}
We need only prove the assertion about negative curvature. We may assume that
$x_{1}=x_{2}=1.$We consider the embedding of $SU(3)$ into $G$ so that $T^{2}$
imbeds in $K$ and the the imbedding of the complement to $Lie(T^{2})$ in
$Lie(SU(3)$, $\mathfrak{q}$, imbeds in $\mathfrak{p}$ as $\mathbb{C}$ imbeds
in $\mathbb{H}$ or $\mathbb{O}$. We note that if $u,v\in$ $\mathfrak{q}$ then
the formula in Lemma 7.3 of [W] reduces the calculation to the case of
$SU(3).$We compute a specific curvature
\[
u=\left[
\begin{array}
[c]{ccc}%
0 & u_{1} & u_{2}\\
-u_{1} & 0 & u_{3}\\
-u_{2} & -u_{3} & 0
\end{array}
\right]  ,v=\left[
\begin{array}
[c]{ccc}%
0 & v_{1} & v_{2}\\
-v_{1} & 0 & v_{3}\\
-v_{2} & -v_{3} & 0
\end{array}
\right]
\]
with $u_{1}=1,v_{1}=-1,u_{2}=v_{2}=1/\sqrt{1+x^{2}},u_{3}=v_{3}=x/\sqrt
{1+x^{2}}$ with $x\in\mathbb{R}$. Then with $x_{1}=x_{2}=1,x_{3}=1+t$%
\[
g(R(u,v)v,u)=\frac{2}{1+x^{2}}(1-3t+(1+t)^{2}x^{2}).
\]
So if $t=\frac{1}{3}+s$ with $s>0$ and
\[
0<x<\sqrt{\frac{3s}{(1+(\frac{4}{3}+3s)^{2})}}%
\]
then the curvature corresponding to the two plane $span_{%
\mathbb{R}
}(u,v)$ is negative. This shows that the there is negative Gaussian curvature
for any $t>\frac{1}{3}$ so my condition is necessary and sufficient.
\end{proof}

We also note that Schur's lemma implies that the Ricci curvature of $g$,
denoted Ric$(g)$, is given by
\begin{equation}
\text{Ric}(g)=x_{1}r_{1}\left\langle ...,...\right\rangle _{0}|_{V_{1}}%
+x_{2}r_{2}\left\langle ...,...\right\rangle _{0}|_{V_{2}}+x_{3}%
r_{3}\left\langle ...,...\right\rangle _{0}|_{V_{3}}.
\end{equation}
Using the (first) Lemma 7.1 in [W] it is easily seen that $r_{i}$ is given by
\begin{equation}
r_{i}=\frac{dx_{i}^{2}-dx_{j}^{2}-dx_{k}^{2}+(10d-8)x_{j}x_{k}}{2x_{1}%
x_{2}x_{3}}%
\end{equation}
where $\{i,j,k\}=\{1,2,3\}.$

We note that the Ricci flow preserves left invariant metrics on the spaces
$G/K$ and hence can be considered to be the ordinary differential equation%
\begin{equation}
\frac{dx_{i}}{dt}=-2r_{i}x_{i}%
\end{equation}
In particular we see that the set of metrics with $x_{i}=x_{j}$ for some $i,j
$ is preserved by the Ricci flow. Also, permutation of the indices of the
$x_{i}$ preserves the solutions.

\section{The sectional curvature}

In this section we will prove that the Ricci flow deforms some metric $g$ with
strictly positive curvature into metric with some negative sectional
curvature. To start with, we investigate the metric $g_{0}\longleftrightarrow
(1,1,\frac{4}{3})$ which, in light of Lemma 2.1 is of nonnegative sectional
curvature and $g\longleftrightarrow(1,1,u)$, $u>\frac{4}{3}$ has some strictly
negative curvature. Using the symmetric invariance of the system (2.4) we note
that if we start with $g_{0}\longleftrightarrow(1,1,\frac{4}{3})$ under the
Ricci flow the metric $g_{t}\longleftrightarrow(x_{1}(t),x_{2}(t),x_{3}(t))$
satisfies $x_{1}(t)=x_{2}(t)$. Our strategy to prove that some curvature turns
negative is to show that
\begin{equation}
\frac{d}{dt}_{t=0}\frac{x_{3}(t)}{x_{1}(t)}>0.
\end{equation}
This will say that there exists $\varepsilon>0$ such that $\frac{1}%
{x_{3}(-\varepsilon)}g_{-\varepsilon}\longleftrightarrow(1,1,u)$ with
$1<u<\frac{4}{3}$ and $\frac{1}{x_{3}(\varepsilon)}g_{\varepsilon
}\longleftrightarrow(1,1,v)$ with $v>\frac{4}{3}$, So Lemma 2.1 implies our
assertion. We now carry out the calculation.%
\[
\frac{d}{dt}\frac{x_{3}(t)}{x_{1}(t)}=\frac{x_{3}^{\prime}(t)x_{1}%
(t)-x_{3}(t)x_{1}^{\prime}(t)}{x_{1}(t)^{2}}%
\]
so (2.4) implies that%
\begin{equation}
\frac{d}{dt}\frac{x_{3}(t)}{x_{1}(t)}=-2\frac{x_{3}(t)}{x_{1}(t)}\left(
r_{3}-r_{1}\right)  .
\end{equation}
In the three cases ($d=2,4,8$) we have $-2(r_{3}-r_{1})=-2+\frac{4d}{3}>0$.

We have proved

\begin{theorem}
On the three examples of [W] the Ricci flow deforms certain positively curved
metrics into metrics with mixed sectional curvatures.
\end{theorem}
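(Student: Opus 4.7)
The plan is to find a one-parameter family of metrics $g_t \longleftrightarrow (x_1(t), x_2(t), x_3(t))$ evolving under the Ricci flow so that for slightly negative $t$ the metric has strictly positive sectional curvature while for slightly positive $t$ it has some negative sectional curvature. Lemma 2.1 gives a clean picture of the relevant wall: among metrics with $x_1 = x_2$, positive sectional curvature holds precisely for $x_3/x_1 \in (0,1) \cup (1, 4/3)$, while $x_3/x_1 > 4/3$ forces some negative sectional curvature. Accordingly, I would initialize the flow at the boundary metric $g_0 \longleftrightarrow (1, 1, 4/3)$ and show that it crosses the wall $x_3/x_1 = 4/3$ transversally.

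First, since the system (2.4) is invariant under permutations of the indices and $x_1(0) = x_2(0)$, uniqueness of ODE solutions forces $x_1(t) = x_2(t)$ throughout the interval of existence, so the flow remains in the two-dimensional diagonal slice and the scale-invariant ratio $\rho(t) := x_3(t)/x_1(t)$ alone determines the curvature regime. Direct differentiation of $\rho$ using (2.4) gives
$$\frac{d\rho}{dt} = -2\rho\,(r_3 - r_1),$$
so transversality of the flow to the wall $\rho = 4/3$ reduces to computing the sign of $r_3 - r_1$ at the base point. Substituting $x_1 = x_2 = 1$ and $x_3 = 4/3$ into the explicit formula (2.3) and simplifying is expected to yield $-2(r_3 - r_1) = -2 + 4d/3$, which is strictly positive in each of the three cases $d = 2, 4, 8$.

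Granted that, $\rho(t)$ strictly increases through $4/3$ at $t = 0$. Rescaling $g_t$ by $1/x_1(t)$, which preserves sectional curvatures, produces normalized metrics $\tilde g_t \longleftrightarrow (1, 1, \rho(t))$. By Lemma 2.1, for every sufficiently small $\varepsilon > 0$ the metric $\tilde g_{-\varepsilon}$ has strictly positive sectional curvature while $\tilde g_{\varepsilon}$ has some negative sectional curvature. Taking as initial datum a metric in the positive-curvature regime just before the wall, the Ricci flow then deforms strict positive sectional curvature into mixed sectional curvature in finite time, establishing the theorem in all three cases simultaneously.

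The only real computation is the sign of $r_3 - r_1$ at the single point $(1, 1, 4/3)$, a routine substitution in (2.3); I would expect this to be the main bookkeeping step but not a conceptual obstacle. The remainder of the argument is soft: permutation invariance of the ODE system, scale invariance of sectional curvature, and direct appeal to Lemma 2.1.
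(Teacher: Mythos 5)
Your proposal is correct and follows essentially the same route as the paper: initialize the flow at the boundary metric $(1,1,\tfrac{4}{3})$, use permutation invariance of (2.4) to stay in the slice $x_1=x_2$, compute $\frac{d}{dt}\frac{x_3}{x_1}=-2\frac{x_3}{x_1}(r_3-r_1)$ with $-2(r_3-r_1)=-2+\frac{4d}{3}>0$, and conclude by Lemma 2.1 after rescaling. The paper's proof is exactly this calculation, so no further comparison is needed.
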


We note that this result for the $12$ dimensional example follows from [BW].

\section{Change in Ricci curvature.}

We first indicate why the method of the last section doesn't work for Ricci
curvature. We consider the case when $x_{1}=x_{2}$ and calculate
\begin{equation}
2(r_{1}-r_{3})=\frac{-2(1-\frac{x_{3}}{x_{1}})((4d-4)-d\frac{x_{3}}{x_{1}}%
)}{x_{3}}.
\end{equation}
We therefor see in (light of (3.2)) that if $0<\frac{x_{3}(t)}{x_{1}(t)}<1$
then $\frac{d}{dt}\frac{x_{3}(t)}{x_{1}(t)}<0$. So if we started the Ricci
flow with a (positive curvature) initial condition $x_{1}=x_{2},\frac{x_{3}%
}{x_{1}}<1$ then $\frac{x_{3}}{x_{1}}$ is decreasing. If initially
$1<\frac{x_{3}}{x_{1}}<\frac{4(d-1)}{d}$ then under the flow we would have
$\frac{d}{dt}\frac{x_{3}(t)}{x_{1}(t)}>0$. If $\frac{4(d-1)}{d}<\frac{x_{3}%
}{x_{1}}$ then $\frac{d}{dt}\frac{x_{3}(t)}{x_{1}(t)}<0.$ Thus $\frac{x_{3}%
}{x_{1}}=1$ is a repelling (i.e unstable fixed point) and $\frac{x_{3}}{x_{1}%
}=$\ $\frac{4(d-1)}{d}$ is an attractor. The upshot is that if the initial
condition is $x_{1}=x_{2}$ and the sectional curvature is positive then
$\frac{x_{3}}{x_{1}}<\frac{4(d-1)}{d}$ for the entire Ricci flow. On the other
hand the Ricci tensor for $x_{1}=x_{2}$ is given by%
\[
\frac{(10d-8-d\frac{x_{3}}{x_{1}})}{2}(\left\langle \dots,\dots\right\rangle
_{0}|_{V_{1}}+\left\langle ...,...\right\rangle _{0}|_{V_{2}})+\frac
{(8d-8)-d\left(  \frac{x_{3}}{x_{1}}\right)  ^{2}}{2}\left\langle
...,...\right\rangle _{0}|_{V_{3}}.
\]
Thus if we begin the Ricci flow with a metric of positive curvature and
$x_{1}=x_{2}$ then $\frac{_{x_{3}}}{x_{1}}<\frac{4(d-1)}{d}$ which implies
that%
\[
\frac{(10d-8-d\frac{x_{3}}{x_{1}})}{2}>\frac{3d-2}{d}>0
\]
and
\begin{equation}
\frac{(8d-8)-d\left(  \frac{x_{3}}{x_{1}}\right)  ^{2}}{2}>\frac
{4(d-1)(3d-2)}{d}>0.
\end{equation}
This indicates how delicate the methods of [BW] must be.

We observe that if the initial condition satisfies $x_{2}>x_{1}>x_{3}$ then
the flow will stay among the homogeneous metrics satisfying this condition.
Since Ricci curvature is invariant under constant scalar multiples of the
metric we may assume that our initial metric corresponds to $x_{1}%
=1,x_{2}=1+r,x_{3}=s$ and $s<1$ (notice that Lemma 2.1 implies that if $s<1$
is fixed and $r$ is sufficiently small then the metric has positive sectional
curvature ). We also note that (2.3) implies that if $x_{1}=1,x_{2}%
=1+r,x_{3}=s$ then the coefficients of the Ricci curvature are given by%
\[
r_{1}x_{1}=\frac{-2rd-dr^{2}+(10d-8)s+(10d-8)rs-ds^{2}}{2(1+r)s},
\]%
\[
r_{2}x_{2}=\frac{dr+dr^{2}+(10d-8)s-ds^{2}}{2s}%
\]
and%
\[
r_{3}x_{3}=\frac{(8d-8)+(8d-8)r-dr^{2}+ds^{2}}{2(1+r)}.
\]
Thus if $s<1$ and $0<r<1$ then $r_{2}x_{2}$ and $r_{3}x_{3}$ are strictly
positive. If we solve the quadratic equation for $r_{1}x_{1}=0$ then we have
for the cases $d=2,4,8$ respectively%
\[
r=\sqrt{1+8s^{2}}-(1-3s),
\]%
\[
r=\sqrt{1+15s^{2}}-(1-4s)
\]
and%
\[
r=\sqrt{1+\frac{77}{4}s^{2}}-(1-\frac{9}{2}s).
\]
We note that if we substitute these values of $r$ into the above coefficients
of the Ricci tensor then \ we find that if $s<1$, $r_{2}x_{2}>0$ and
$r_{3}x_{3}>0$. So if we show that if we take our initial condition at such a
value $\frac{dr_{1}}{dt}<0$ we will have shown that the Ricci flow transitions
from positive definite to signature $(d,2d)$ ($d$ negatives). We therefore
study%
\[
-2\sum r_{i}x_{i}\frac{\partial r_{1}}{\partial x_{i}}%
\]
at these values we find that if $d=2$ then this expression is negative for
$0<s<1-\sqrt{\frac{5}{8}}$ $(0.20943058...)$ for $d=4$ the expression is
negative for $0<s<$ $\frac{30+5\sqrt{21}-3\sqrt{5(21+4\sqrt{21})}}{30}$
$(0.361437...)$ and for $d=8$ the expression is negative for%
\[
0<s<\frac{693+11\sqrt{2737}-7\sqrt{22(511+9\sqrt{2737})}}{616}%
\]
($0.389089...).$This proves

\begin{theorem}
For all the examples in [W] (i.e. the manifold of flags in the two
\ dimensional projective space over $\mathbb{C}$, $\mathbb{H}$ or $\mathbb{O}%
$)the Ricci flow of a metric with positive definite Ricci tensor can flow to
one with signature $(d,2d)$.
\end{theorem}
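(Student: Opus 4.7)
The plan is to locate metrics on the boundary $\mathcal{B}$ of the positive-definite-Ricci region at which the eigenvalue of $\mathrm{Ric}$ on $V_{1}$ vanishes while the other two eigenvalues remain strictly positive, and then show that the Ricci flow at such a metric pushes the $V_{1}$ eigenvalue into the negative region. The open cone $\{x_{2}>x_{1}>x_{3}\}$ is invariant under the flow and $\mathrm{Ric}$ is scale-invariant, so I parametrize the candidates by $x_{1}=1,\ x_{2}=1+r,\ x_{3}=s$ with $0<s<1$ and $r>0$. Inspection of the explicit formulas (2.3) shows that $r_{2}x_{2}$ and $r_{3}x_{3}$ remain strictly positive throughout the relevant region, while $r_{1}x_{1}$ is a quadratic in $r$ whose relevant positive root $r=r_{d}(s)$ gives the explicit expressions for $d=2,4,8$ displayed in the excerpt. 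This defines $\mathcal{B}=\{(1,1+r_{d}(s),s):0<s<1\}$, along which $\mathrm{Ric}$ has the $d$-dimensional null direction $V_{1}$ and is positive definite on $V_{2}\oplus V_{3}$.

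For the behavior of $r_{1}$ under the flow, the chain rule combined with $\dot x_{i}=-2r_{i}x_{i}$ yields
\begin{equation*}
\frac{dr_{1}}{dt}=-2\sum_{i=1}^{3}r_{i}x_{i}\,\frac{\partial r_{1}}{\partial x_{i}}.
\end{equation*}
On $\mathcal{B}$ the $i=1$ term vanishes, since $r_{1}=0$, and the right-hand side reduces to a combination of $r_{2}x_{2}$ and $r_{3}x_{3}$ (each positive) weighted by rational functions of $r$ and $s$. Substituting $r=r_{d}(s)$ collapses this to a single explicit expression in $s$ (rational in $s$ and in the radical $\sqrt{1+c_{d}s^{2}}$). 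The heart of the argument, and the main obstacle, is then to verify that this expression is strictly negative on the intervals claimed in the theorem; this is a case-by-case algebraic computation in which the upper endpoint of the admissible interval of $s$ must be extracted as the smallest positive zero of the resulting expression, which accounts for the nested-radical bounds quoted for $d=4,8$.

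Once the negativity of $dr_{1}/dt$ on $\mathcal{B}$ is in hand, the theorem follows by continuity. For any $s$ in the admissible interval, initiate the Ricci flow at $(1,1+r_{d}(s),s)\in\mathcal{B}$; for small $\varepsilon>0$, at time $-\varepsilon$ all three coefficients $r_{i}$ are strictly positive, so $\mathrm{Ric}$ is positive definite, while at time $+\varepsilon$ one has $r_{1}<0$ together with $r_{2},r_{3}>0$, so $\mathrm{Ric}$ has signature $(d,2d)$ with the $d$ negative directions supplied by $V_{1}$. Translating the time origin back to $-\varepsilon$ then produces the required flow from a positive-definite Ricci tensor to one of signature $(d,2d)$.
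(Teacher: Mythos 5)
Your proposal follows essentially the same route as the paper: both parametrize the boundary of the positive-Ricci region by $x_{1}=1$, $x_{2}=1+r$, $x_{3}=s$ with $r_{1}x_{1}=0$ solved for $r$ as an explicit function of $s$, both reduce the key step to checking that $-2\sum r_{i}x_{i}\,\partial r_{1}/\partial x_{i}<0$ on the stated $s$-intervals, and both conclude by running the flow backward and forward from the boundary point. The only differences are cosmetic (you note explicitly that the $i=1$ term drops out because $r_{1}=0$, and you spell out the final continuity step that the paper leaves implicit), so this is the paper's proof.
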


\section{Concluding remarks}

In this section we will give a necessary and sufficient condition that the
metric corresponding to $(x_{1},x_{2},x_{3})$ have positive curvature for the
three types of examples that we have been studying. We compare this condition
to what is necessary for positive Ricci curvature and one, thereby, gets a
better understanding of the result in [BW].

We first observe that the permutation action of the symmetric group permutes
the $(x_{1},x_{2},x_{3})$ that correspond to strictly positive curvature among
themselves. we have also completely described the $(x_{1},x_{2},x_{3})$ with
some pair $x_{i}=x_{j}$ with $i\neq j$. Thus we are left with the cases where
\[%
{\displaystyle\prod\limits_{i<j}}
(x_{i}-x_{j})\neq0.
\]
Using the action of the symmetric group just described we may assume that
\thinspace$x_{2}>x_{1}>x_{3}>0$ (we chose this order to be consistent with the
results of [BW]. Since a multiplication by a positive scalar doesn't change
the sign of curvature we may assume that $x_{1}=1,x_{2}=1+r$ and $x_{3}=s$
with $r>0$ and $s<1.$The following result follows directly from Theorem 3 a)
in [V].

\begin{proposition}
With the notation above a necessary and sufficient condition that the
sectional curvature be positive is $r<\frac{s-2+2\sqrt{1-s+s^{2}}}{3}$.
\end{proposition}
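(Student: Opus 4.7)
The plan is to specialize Theorem 3(a) of [V] to the coordinates $(x_{1},x_{2},x_{3})=(1,1+r,s)$. Valiev's theorem provides a necessary and sufficient polynomial criterion on an arbitrary ordered triple $(x_{1},x_{2},x_{3})$ for the sectional curvature of the corresponding homogeneous metric to be strictly positive. By the symmetric group action on the parameters already noted, we may restrict to the open chamber $x_{2}>x_{1}>x_{3}>0$, and then use the scale invariance of the sign of the curvature to set $x_{1}=1$, $x_{2}=1+r$, $x_{3}=s$ with $r>0$ and $0<s<1$.

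First I would write out Valiev's criterion in our variables and substitute these values. The key claim, which I expect to be the only nontrivial computational step, is that the criterion collapses in this chamber to the quadratic inequality
\begin{equation*}
3r^{2}+(4-2s)r-s^{2}<0.
\end{equation*}
Second, I would solve this quadratic. Its discriminant equals $(4-2s)^{2}+12s^{2}=16(1-s+s^{2})$, and the elementary estimate $4(1-s+s^{2})-(2-s)^{2}=3s^{2}>0$ shows that on $0<s<1$ the larger root is strictly positive while the smaller is strictly negative. Combining with the standing assumption $r>0$ then forces $r$ to lie below the larger root, which is exactly $\frac{s-2+2\sqrt{1-s+s^{2}}}{3}$.

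The main obstacle is the first step: one has to transcribe Valiev's condition (stated in a form symmetric enough to cover all orderings of the $x_{i}$) into our chamber and verify that the reduction really is to a genuine quadratic in $r$ rather than something of higher degree in either variable. Two sanity checks help guard against slips. Letting $s\to 1^{-}$ gives the limit bound $r<1/3$, which matches Lemma 2.1 after permuting the equal indices $x_{1}=x_{3}$ to recover the familiar bound $x_{3}/x_{1}<4/3$. Letting $s\to 0^{+}$ forces $r\to 0^{+}$, consistent with the degeneration of the metric as $x_{3}\to 0$. Once the reduction to the quadratic is in hand, everything else is routine application of the quadratic formula.
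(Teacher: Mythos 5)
Your approach is essentially the paper's: the paper proves this Proposition only by asserting that it ``follows directly from Theorem 3 a) in [V]'', which is exactly the specialization you outline. Your additional algebra is consistent and in fact more explicit than the paper --- the quadratic $3r^{2}+(4-2s)r-s^{2}<0$ does have $\frac{s-2+2\sqrt{1-s+s^{2}}}{3}$ as its positive root, and your limiting checks at $s\to1^{-}$ and $s\to0^{+}$ are correct --- though, like the paper, you leave the actual transcription of Valiev's criterion unverified.
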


\noindent\textbf{Remark.} We note that if $0<s<1$ then
\[
\frac{s^{2}}{4}<\frac{s-2+2\sqrt{1-s+s^{2}}}{3}<\frac{s^{2}}{3}%
\]
and the expression estimated is monotone increasing. This can be seen in the
following graph:%
\begin{figure}[h]%
\centering
\includegraphics[
natheight=3.720200in,
natwidth=4.479800in,
height=2.1212in,
width=2.5504in
]%
{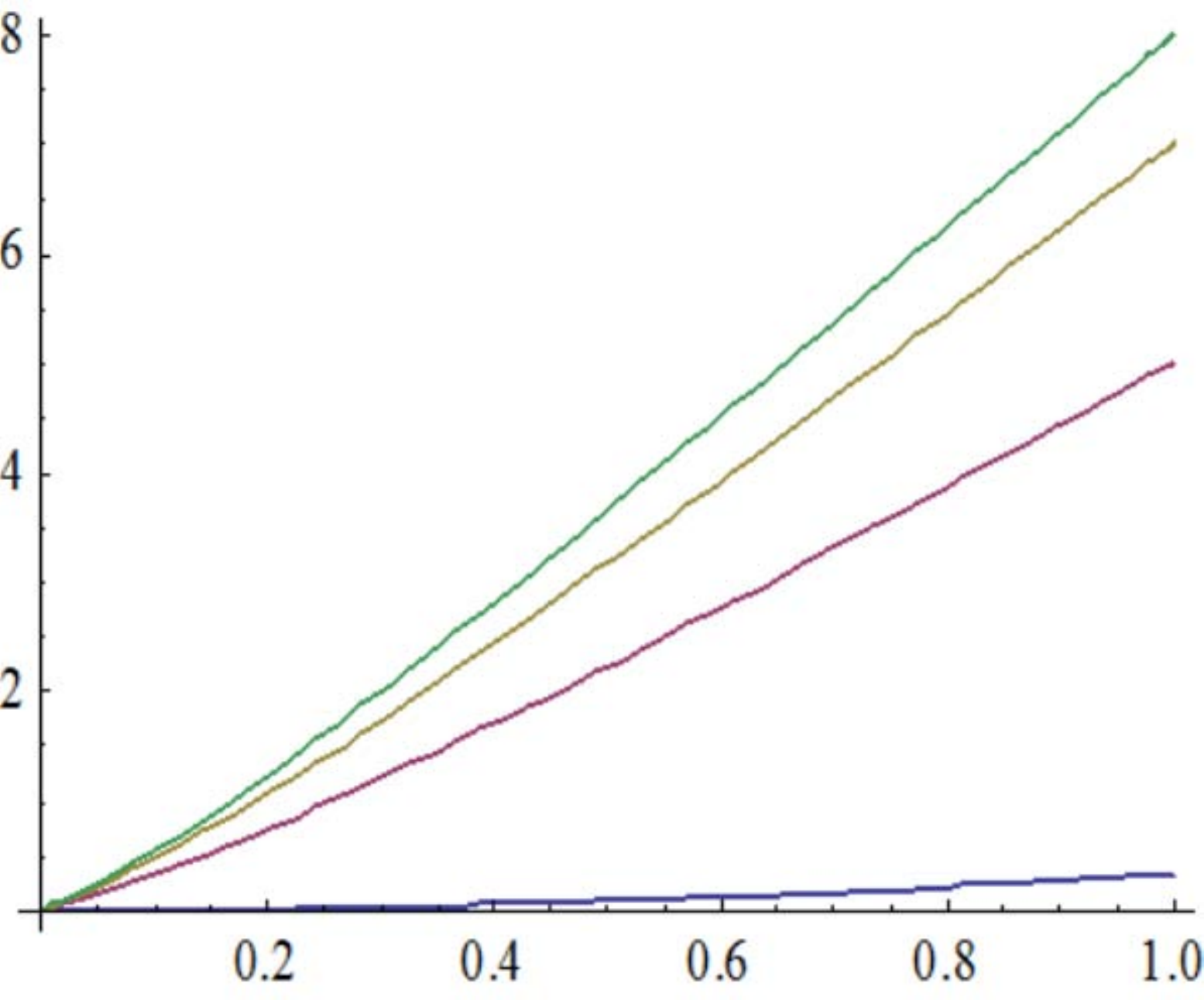}
\end{figure}

The axes are horizontal, $s$, and vertical, $r$, the set points under each
curve represent the $r$ values for each $s$ value such that $(1,1+r,s)$ with
$r>0$ and $0<s<1$ respectively satisfies the necessary condition above (lowest
curve, blue), the necessary and sufficient condition for positive Ricci
curvature for the $6$ (second curve, red), $12\ $(third curve,yellow) and $24$
(top curve,green) dimensional examples. We note that to get the full set of
metrics with strictly positive curvature satisfying the inequalities
$x_{3}\leq x_{1}\leq x_{2}$ one must allow the points $(s,0)$, $0<s<1$ and
$(1,r)$ with $0<r<\frac{1}{3}$ (that is add the the original set given in [AW]).

\bigskip

In the argument in [BW] they start their Ricci flow at a metric corresponding
to $(x_{1},x_{2},x_{3})$ such that $(x_{1},x_{2},x_{3})/x_{1}=(1,1+r,s)$ (in
our notation) and $r>0$, $0<s<1$ (the reason for our strange condition). In
light of the above $(r,s)$ must be below the blue curve in the graph above. In
the Ricci flow (normalized or not) the set $x_{2}>x_{1}>x_{3}$ is preserved.
If $(x_{1}(t),x_{2}(t),x_{3}(t))$ is a point in the flow\ and $(x_{1}%
(t),x_{2}(t),x_{3}(t))/x_{1}(t)=(1,1+r(t),s(t))$ then their the curve
$(s(t),r(t))$ starts at $t=0$ under the lowest (blue) curve (so as to have
positive curvature) and it must eventually cross the yellow (second highest)
curve in order to have some negative Ricci curvature.

\begin{center}
{\LARGE References}

\bigskip
\end{center}

\noindent\lbrack AW] Simon Aloff, Nolan R. Wallach,An infinite family of
distinct 7-manifolds admitting positively curved Riemannian structures, Bull.
Amer. Math. Soc.,81(1975),93-97.\smallskip

\noindent\lbrack BW]C. B\"ohm, B. Wilking, \textit{Nonnegatively curved
manifolds with finite fundamental groups admit metrics with positive Ricci
curvature}, Geom. funct. anal. 17(2007),665-681.\smallskip

\noindent\lbrack V]F.M. Valiev, \textit{Precise estimates for the sectional
curvatures of homogeneous Riemannian metrics on Wallach spaces}, Siberian
Math. Journal 20 (1979), 176-187.\smallskip

\noindent\lbrack W]Nolan R. Wallach, \textit{Compact homogeneous Riemannian
manifolds with strictly positive curvature}, Anna. of Math. (2) 96 (1972), 277-295.
\end{document}